\newtheorem{thm}{\bf Theorem}
\newtheorem{conj}[thm]{\bf Conjecture}
\newtheorem{question}[thm]{\bf Question}
\newtheorem{lem}[thm]{\bf Lemma}
\theoremstyle{remark}
\newcommand{\bb}[1]{\mathbb{#1}}
\newcommand{\be}{{\bar{e}}}
\DeclareMathOperator{\E}{\mathbb{E}}
\title{Triangle-independent sets vs. cuts 
}
\author{
Sergey Norin \and Yue Ru Sun
}
 \address{Department of Mathematics and Statistics, McGill University.}  \thanks{Supported by an NSERC grant 418520.}
\begin{document}

\begin{abstract}
A set of edges $T$ in a graph $G$ is \emph{triangle-independent} if 
$T$ contains at most one edge from each triangle in $G$. Let $\alpha_1(G)$ denote the  maximum size of the triangle-independent set in $G$, and let $\tau_B(G)$ denote  minimum size of a set $F \subseteq E(G)$ such that $G \setminus F$ is bipartite.   We prove that $$\alpha_1(G) + \tau_B(G) \leq \frac{|V(G)|^2}{4},$$
verifying a conjecture due to Lehel, and independently Puleo, and a slightly weaker conjecture of Erd\H{o}s, Gallai and Tuza.
Further, we characterize the graphs which attain the equality.
\end{abstract}
\maketitle
\section{Introduction} 
Erd\H{o}s popularized several quantitative versions of the following question: \emph{``How far from a bipartite graph can a triangle-free graph be?''} Perhaps the most natural way to quantify this question is as follows.
Let $\tau_B(G)$ denote the minimum cardinality of $F \subseteq E(G)$ such that $G \setminus F$ is bipartite.  (All the graphs in this paper are finite and simple.) The following tantalizing conjecture of Erd\H{o}s has received considerable attention over the years. 
 
\begin{conj}[Erd\H{o}s~\cite{ErdProb84}]\label{c:bipartite}
	Let $G$ be a triangle-free graph on $n$ vertices then $\tau_B(G) \leq n^2/25.$ 
\end{conj}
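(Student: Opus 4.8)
\medskip
\noindent\textbf{Proof proposal.}
Since Conjecture~\ref{c:bipartite} is a long-standing open problem, what follows is a plan of attack rather than a completed argument. The plan is to reduce the conjecture to a single ``middle-density'' regime using the main theorem of this paper, and then to attack that regime by a regularity reduction to a finite weighted extremal problem together with a stability analysis anchored at the conjectured extremal graph. Throughout write $\brm{mc}(G)$ for the maximum number of edges in an edge cut of $G$; since deleting $F$ leaves a bipartite graph exactly when $E(G)\setminus F$ is contained in the cut induced by some bipartition, one has $\tau_B(G)=|E(G)|-\brm{mc}(G)$. The conjectured extremiser is the balanced blow-up $C_5[n/5]$ of the $5$-cycle, which is triangle-free with $|E|=n^2/5$ and becomes a (bipartite) blow-up of the path $P_5$ after deleting one of its five bundles of $n^2/25$ edges; any proof must be exactly tight there.

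\emph{Dense regime.} If $G$ is triangle-free then every subset of $E(G)$ is triangle-independent, so $\alpha_1(G)=|E(G)|$ and the main theorem of the paper gives $|E(G)|+\tau_B(G)\le n^2/4$, i.e.\ $\tau_B(G)\le\tfrac14 n^2-|E(G)|$. Thus $\tau_B(G)\le n^2/25$ as soon as $|E(G)|\ge\tfrac{21}{100}n^2$. At the opposite end a uniformly random cut gives $\tau_B(G)\le\tfrac12|E(G)|$, which suffices when $|E(G)|\le\tfrac{2}{25}n^2$; interpolating the two already yields the uniform estimate $\tau_B(G)\le n^2/12$ for every triangle-free $G$ (the worst case being $|E(G)|=\tfrac16 n^2$). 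It therefore remains to treat the window $\tfrac{2}{25}n^2<|E(G)|<\tfrac{21}{100}n^2$, which is precisely where the extremal graph, at $|E|=n^2/5$, lives, and where the factor between $n^2/12$ and the conjectured $n^2/25$ has to be earned.

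\emph{Middle-density regime.} Here I would combine three ingredients. (i) Sharpen the max-cut bound using triangle-freeness: for a maximum cut $(A,B)$, optimality forces $\deg_A(v)\le\deg_B(v)$, while triangle-freeness forces $\deg(u)+\deg(v)\le n$ for every edge $uv$, whence $\deg_A(u)+\deg_A(v)\le n/2$ for each edge $uv$ inside $A$; Cauchy--Schwarz over the edges inside $A$ and inside $B$ then gives $\tau_B(G)\le n^2/8$, and with more care a density-dependent improvement --- this is the classical line of Edwards, Erd\H{o}s and successors, whose constant is well above $1/25$. (ii) Crucially, bring in the Andr\'{a}sfai--Erd\H{o}s--S\'{o}s structure theory and its quantitative refinements: a triangle-free graph with $\Theta(n^2)$ edges is, after deleting $o(n^2)$ edges, a blow-up of a small triangle-free graph, so via the Szemer\'{e}di regularity lemma (and the fact that a triangle-free graph's reduced graph is essentially triangle-free) the conjecture becomes, asymptotically, the finite statement
\[
\sum_{ij\in E(R)}w_iw_j\;-\;\max_{S\subseteq V(R)}\ \sum_{\substack{ij\in E(R)\\ i\in S,\ j\notin S}}w_iw_j\;\le\;\frac{1}{25}
\]
over all triangle-free graphs $R$ and all weights $w_i\ge 0$ with $\sum_i w_i=1$. (iii) Attack this finite optimisation by semidefinite programming / flag algebras, combined with a local-perturbation computation aimed at showing that $C_5$ with uniform weights is its unique maximiser.

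\emph{Main obstacle.} The crux is the constant factor in the middle window. The bare max-cut bound gives only $\approx n^2/10$ at the extremal density, the paper's tradeoff only $\approx n^2/20$, their interpolation only $n^2/12$, the refined combinatorial arguments only about $n^2/18$, and even state-of-the-art flag-algebra/SDP computations reach only roughly $0.046\,n^2$ --- all still a definite constant away from $n^2/25\approx 0.04\,n^2$. The reason is that none of these tools ``sees'' that a near-extremal $G$ must globally resemble $C_5[n/5]$; a complete proof seems to require a genuine stability theorem --- every triangle-free $G$ with $\tau_B(G)\ge n^2/25-\eps n^2$ has edit distance $O(\sqrt\eps)\,n^2$ from a blow-up of $C_5$ --- together with a strict-local-maximality computation for the $C_5$ blow-up in the finite problem above and an argument ruling out every ``competitor'' triangle-free reduced graph from coming within $1/25$ of the bound. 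Supplying that stability input and controlling the competitor reduced graphs is the step I expect to be hardest, and the one on which I would concentrate effort.
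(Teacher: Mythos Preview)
The paper does not contain a proof of Conjecture~\ref{c:bipartite}; it is stated there as a long-standing open problem of Erd\H{o}s and serves only as motivation and context. You correctly recognise this and present a research plan rather than a proof, so there is no ``paper's own proof'' to compare against.

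Your density-window reduction is correct and is in fact a pleasant corollary of the paper's main result. For triangle-free $G$ every edge set is triangle-independent, so $\alpha_1(G)=|E(G)|$, and Theorem~\ref{t:main} gives $\tau_B(G)\le n^2/4-|E(G)|$; combined with the random-cut bound $\tau_B(G)\le|E(G)|/2$ this indeed disposes of the ranges $|E(G)|\ge 21n^2/100$ and $|E(G)|\le 2n^2/25$ and yields the uniform $\tau_B(G)\le n^2/12$. The paper does not spell out this consequence, so this is a genuine (if small) observation on your part.

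Beyond that point your proposal is an honest outline of standard machinery (regularity reduction to a weighted problem, SDP/flag algebras, a hoped-for stability theorem near the $C_5$ blow-up) together with an accurate assessment that none of it currently closes the gap to $1/25$. The paper's concluding remarks make a related cautionary point you might incorporate: Algorithm~\ref{a:cut}, applied to the Clebsch graph, \emph{always} produces a partition with $\be(A,B)=12>16^2/25$, so the specific cut-generating procedure of this paper cannot by itself yield Conjecture~\ref{c:bipartite}. This is a concrete obstruction of exactly the kind your ``main obstacle'' paragraph anticipates, and the Clebsch graph (and its relatives) is a known thorn in stability approaches as well. In short, the plan is reasonable as a research direction but is, by your own acknowledgement, not a proof.
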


The problem considered in this paper is loosely related to Conjecture~\ref{c:bipartite}. A set of edges $T$ in a graph $G$ is \emph{triangle-independent} if $T$ contains at most one edge from each triangle in $G$. Let $\alpha_1(G)$ denote the  maximum size of the triangle-independent set in $G$. Lehel and Puleo have independently conjectured the following.

\begin{conj}[Lehel (see~\cite{ErdProblems90}), Puleo~\cite{Puleo15}]\label{c:main}
	Let $G$ be a  graph on $n$ vertices then \begin{equation}\label{e:conj}
	\alpha_1(G)+\tau_B(G) \leq \frac{n^2}{4}.
	\end{equation}
\end{conj}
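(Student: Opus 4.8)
The plan is to reduce the inequality to a statement about bipartitions and to prove the latter by induction on $n=|V(G)|$. Concretely, it suffices to show that for every graph $G$ and every maximum triangle-independent set $T$ there is a bipartition $V(G)=X\cup Y$ with
\[
|T|+e_G(X)+e_G(Y)\le |X|\,|Y|,
\]
where $e_G(S)$ denotes the number of edges of $G$ inside $S$; indeed $\tau_B(G)\le e_G(X)+e_G(Y)$ and $|X|\,|Y|\le\lfloor n^2/4\rfloor$. Two structural facts about a triangle-independent set $T$ will be used throughout. First, the graph $(V(G),T)$ is triangle-free. Second, for every vertex $v$ the set $N_T(v)$ of $T$-neighbours of $v$ is independent in $G$: an edge of $G$ inside $N_T(v)$ would, together with the two edges of $T$ joining $v$ to its endpoints, give a triangle of $G$ containing two edges of $T$.

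For the inductive step fix a maximum triangle-independent set $T$ of $G$. Deleting from $T$ the edges incident with a vertex $v$ yields a triangle-independent set of $G-v$, so $\alpha_1(G)-\alpha_1(G-v)\le\deg_T(v)$. Given $v$, apply the inductive hypothesis to $G-v$ to obtain a bipartition $(X',Y')$ of $V(G)\setminus\{v\}$ with $\alpha_1(G-v)+e_{G-v}(X')+e_{G-v}(Y')\le|X'|\,|Y'|$, and add $v$ to $X'$ or to $Y'$. A direct calculation shows that adding $v$ to $X'$ preserves the displayed inequality provided $\deg_T(v)+|N_G(v)\cap X'|\le|Y'|$, and symmetrically for $Y'$; since the two slacks sum to $(n-1)-\deg_G(v)$, one of the two choices works whenever $2\deg_T(v)+\deg_G(v)\le n-1$. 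As $\sum_v\bigl(2\deg_T(v)+\deg_G(v)\bigr)=4\alpha_1(G)+2|E(G)|$, an averaging argument produces such a vertex, completing the induction, unless
\[
2\,\alpha_1(G)+|E(G)|>\binom{n}{2}.
\]

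The remaining case, where $G$ is dense and carries a relatively large triangle-independent set, is the crux and the main obstacle, and in my view genuinely requires a separate structural argument. Here $\alpha_1(G)=|T|\le\lfloor n^2/4\rfloor$ by Mantel's theorem applied to $(V(G),T)$, while at the other extreme $\tau_B(G)$ can also be of order $n^2/4$ (as for $G=K_n$), so the two terms must be controlled together. One natural route is stability: when $|T|$ is close to $\lfloor n^2/4\rfloor$, the triangle-free graph $(V(G),T)$ is close to a balanced complete bipartite graph with parts $P,Q$, and since each $N_T(v)$ is independent in $G$ the parts $P,Q$ are then nearly independent in $G$, so $G$ is nearly bipartite and $\tau_B(G)$ is small; the opposite situation, where $(V(G),T)$ is far from bipartite while $G$ is dense, has to be handled using that $|T|$ is then comfortably below $\lfloor n^2/4\rfloor$. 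The difficulty is to glue these regimes together and to keep every estimate tight enough to land on $\lfloor n^2/4\rfloor$ exactly rather than with an $o(n^2)$ error; I expect this to require the bulk of the work, most plausibly via a finer analysis of a minimal counterexample, which by the reductions above must be of minimum degree at least $n/3$ and triangle-rich.

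The characterisation of equality should then come out of the proof: one records the equality cases of each inequality invoked — the averaging step, the estimate $\alpha_1(G)-\alpha_1(G-v)\le\deg_T(v)$, Mantel's theorem and its stability refinement, and the bound $|X|\,|Y|\le\lfloor n^2/4\rfloor$ — and combines these constraints. They should isolate a short list of extremal graphs, among them $K_n$, the balanced complete bipartite graph $K_{\lceil n/2\rceil,\lfloor n/2\rfloor}$, the graph $K_n$ with a perfect matching removed when $4\mid n$, and $C_5$ when $n=5$.
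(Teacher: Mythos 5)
There is a genuine gap: your induction removes a single vertex and succeeds only when some $v$ satisfies $2\deg_T(v)+\deg_G(v)\le n-1$, and you explicitly leave the complementary case $2\alpha_1(G)+|E(G)|>\binom{n}{2}$ unproven, offering only a stability-style sketch. That remaining case is not a technicality to be patched later; it is the whole difficulty. Stability arguments of the kind you describe naturally lose an $o(n^2)$ (or at best a constant-factor) error, whereas the conjecture must be hit exactly at $n^2/4$, and there is a large and varied family of graphs attaining equality (joins of complete balanced bipartite graphs), which is precisely why Erd\H{o}s, Gallai and Tuza flagged the problem as hard. Nothing in your outline indicates how to glue the ``$T$ nearly bipartite'' and ``$T$ far from bipartite'' regimes together with zero error, so as written the argument does not constitute a proof.

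For comparison, the paper avoids the dense case entirely by changing what is deleted and what is averaged over. It fixes a maximum triangle-independent set $S$, treats $(G,S)$ as a triangle-free trigraph, and builds the bipartition recursively: pick an edge $uv\in S$ uniformly at random, place the unassigned $S$-neighbours of $u$ on one side and those of $v$ on the other, and recurse on the rest (Algorithm~\ref{a:cut}). The induction is on the trigraph obtained by deleting $N_S(u)\cup N_S(v)$, and the averaging is over the edges of $S$ rather than over vertices; the resulting expectation is expressed through counts of $4$-vertex configurations ($P_4$, $C_4$, $K_{1,3}$, $D$, $R$) and closed by two Cauchy--Schwarz inequalities (Lemma~\ref{l:CS}), giving $\E[\be(A,B)]+|S|\le |V|^2/4$ with no case split. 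Finally, your proposed list of equality cases is off: $C_5$ is not extremal ($\alpha_1(C_5)+\tau_B(C_5)=5+1=6<25/4$), $K_n$ is extremal only for even $n$, and the correct characterization is exactly the joins of complete balanced bipartite graphs (which already includes $K_n$ for even $n$, $K_{n/2,n/2}$, and $K_n$ minus a perfect matching when $4\mid n$).
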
  

Let $\tau_1(G)$ denote the  minimum size of the set of edges in $G$ containing at least one edge from each triangle. (Note that $\tau_1(G) \leq \tau_B(G)$.) Erd\H{o}s, Gallai and Tuza proposed the following weakening of Conjecture~\ref{c:main}.

\begin{conj}[Erd\H{o}s, Gallai and Tuza~\cite{ErdProblems90,EGTTriangle}]\label{c:mainEGT}
	Let $G$ be a  graph on $n$ vertices then \begin{equation}\label{e:conjweak}
	\alpha_1(G)+\tau_1(G) \leq \frac{n^2}{4}.
	\end{equation}
\end{conj}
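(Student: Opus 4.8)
Since $\tau_1(G)\le\tau_B(G)$, Conjecture~\ref{c:main} implies Conjecture~\ref{c:mainEGT}, and I would aim directly at the stronger bound $\alpha_1(G)+\tau_B(G)\le n^2/4$. The plan is to produce a bipartition $(A,B)$ of $V(G)$ such that
\[
\alpha_1(G)+e_G(A)+e_G(B)\ \le\ |A|\,|B|,
\]
where $e_G(X)$ is the number of edges of $G$ with both ends in $X$; because $\tau_B(G)\le e_G(A)+e_G(B)$ and $|A|\,|B|\le n^2/4$, this yields the theorem. A maximum cut is the natural first candidate, since there $e_G(A)+e_G(B)=\tau_B(G)$ is as small as possible, although one may have to trade the balance of $(A,B)$ against the sparsity of the two parts. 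Having fixed a suitable cut $(A,B)$ and a maximum triangle-independent set $T$, split $T=T_A\cup T_B\cup T_{AB}$ into the edges lying inside $A$, inside $B$, and across the cut; since $\alpha_1(G)=|T_A|+|T_B|+|T_{AB}|$, the goal becomes the estimate
\[
e_G(A)+e_G(B)+|T_A|+|T_B|\ \le\ |A|\,|B|-|T_{AB}|,
\]
i.e.\ the number of crossing pairs not spanned by $T$ is at least the number of monochromatic edges of $G$ plus the number of monochromatic edges of $T$. Keeping track of equality here should, in the end, yield the characterisation of the extremal graphs.

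I would establish this by induction on $n=|V(G)|$, eliminating \emph{reducible configurations} from a minimal counterexample $G$. Deleting a vertex $v$ gives $\alpha_1(G)\le\alpha_1(G-v)+d_T(v)$ together with $\tau_B(G)\le\tau_B(G-v)+\lfloor d_G(v)/2\rfloor$, where $d_T(v)=|N_T(v)|$ and, crucially, $N_T(v)$ is independent in $G$ because $T$ is triangle-independent; with the inductive hypothesis this finishes the argument once $G$ has a vertex $v$ for which $d_G(v)$ — or, more sharply, the independence number of $G[N_G(v)]$ — is below a suitable fraction of $n$. Combined with routine reductions (disconnected graphs, small separators, and passing to an edge-maximal counterexample to restrict the non-edges), these steps force the minimal counterexample to be dense, to have minimum degree $\Omega(n)$, to have every edge in a triangle, and — again because $T$-neighbourhoods are independent — to have bounded independence number, so that its maximum cut is nearly balanced. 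The triangle-free case lies outside this scheme (there $T=E(G)$ is forced, and the triangle structure exploited below is absent); it is precisely the statement that any triangle-free graph satisfies $|E(G)|+\tau_B(G)\le\lfloor n^2/4\rfloor$, a sharp stability-type strengthening of Mantel's theorem, which I would handle by a separate argument or cite.

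The heart of the matter is the analysis of the dense, triangle-rich minimal counterexample. I would choose the cut $(A,B)$ and the maximum triangle-independent set $T$ to be \emph{compatible} — e.g.\ with $|T_A|+|T_B|$ minimum, so that no edge of $T$ can be rerouted across the cut and no crossing edge of $G$ outside $T$ can be added to $T$ — and then exploit the maximality of the cut, of $T$, and of $G$ simultaneously. Maximality of the cut gives every vertex at least as many neighbours across the cut as on its own side; maximality and compatibility of $T$ furnish, for every crossing pair $uv\in E(G)\setminus T$, a \emph{witness} common neighbour $w$ with $uw\in T$ or $vw\in T$, and analogous witnesses controlling the edges in $T_A\cup T_B$. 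The idea is then to charge each unit of the left-hand excess in the displayed estimate to crossing non-$T$ pairs through these witnesses, and to argue that the charging is injective up to a small, controllable loss, forcing the estimate unless $G$ degenerates to one of the (few) extremal shapes — which the analysis should pin down as $K_n$ and the balanced complete bipartite graph $K_{\lceil n/2\rceil,\lfloor n/2\rfloor}$ — none of which is a counterexample. This discharging step is where essentially all the difficulty lies: $\alpha_1$ rewards edges that cross cuts, while $\tau_B$ rewards cuts whose parts are sparse, so the two terms interact delicately and the bookkeeping must rule out a whole family of configurations lying within $O(n)$ of equality without attaining it.

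Finally, for the equality characterisation I would re-run the entire argument tracking when equality holds in~\eqref{e:conj}: each reduction lemma then forces the extremal graph to be obtained from a smaller extremal graph by a prescribed operation (for instance adjoining a universal vertex, or a twin of an existing vertex), while the dense and triangle-free analyses pin down $K_n$ and $K_{\lceil n/2\rceil,\lfloor n/2\rfloor}$; assembling these gives the complete list of graphs with $\alpha_1(G)+\tau_B(G)=n^2/4$.
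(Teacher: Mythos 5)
Your first move---deducing \eqref{e:conjweak} from the stronger bound $\alpha_1(G)+\tau_B(G)\le n^2/4$ via $\tau_1(G)\le\tau_B(G)$---is the same as the paper's, and several of your preliminary observations are correct (that $N_T(v)$ is independent in $G$, the vertex-deletion inequalities, the existence of witness triangles from the maximality of $T$ and of the cut). But what follows is a research programme, not a proof: the entire burden is placed on the ``charging through witnesses'' analysis of a dense minimal counterexample, which you leave unexecuted and yourself identify as ``where essentially all the difficulty lies.'' The supporting structural claims about a minimal counterexample are also not established and some are not even routine: adding edges can destroy counterexample status (new triangles may decrease $\alpha_1$ by more than $\tau_B$ increases), so ``passing to an edge-maximal counterexample'' is not a legitimate reduction without argument, and neither ``bounded independence number'' nor ``every edge in a triangle'' follows from the vertex-deletion step you describe. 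In addition, the triangle-free base case you propose to ``handle separately or cite'' is itself a nontrivial published theorem (Puleo's result that \eqref{e:main} holds for triangle-free graphs), not a routine lemma.

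More fundamentally, the plan is calibrated to the wrong extremal family, which is exactly the obstruction the paper identifies. Equality in \eqref{e:conj} is attained not just by $K_n$ (even $n$) and the balanced complete bipartite graph, but by every join $K_{t_1,t_1}\vee\cdots\vee K_{t_k,t_k}$ of complete balanced bipartite graphs (Puleo; Theorem~\ref{t:main}). Any inequality chain of the type you propose must be simultaneously tight on this whole family, and your vertex-deletion reduction is far from tight there (in $K_{t,t}$ one already has $d_T(v)+\lfloor d_G(v)/2\rfloor\approx 3n/4$, well above the available increment of roughly $n/2$), so the unproven charging step would have to absorb all of the slack across infinitely many near-extremal configurations. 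The paper's concluding remarks note that the optimal cuts of these join examples cannot be produced by one-step local rules; this is precisely the difficulty a ``fixed compatible max cut plus discharging'' scheme runs into, and it is why the paper instead fixes a maximum triangle-independent set $S$, generates the bipartition by the recursive randomized Algorithm~\ref{a:cut}, and closes the estimate with Cauchy--Schwarz identities among four-vertex configuration counts (Theorem~\ref{t:main2}). So there is a genuine gap at the core step, and the intended equality characterisation (only $K_n$ and $K_{\lceil n/2\rceil,\lfloor n/2\rfloor}$) is incorrect, which strongly suggests the discharging cannot be closed in the form you describe.
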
 

Erd\H{o}s~\cite{ErdProblems90} noted, and it was reiterated in~\cite{EGTTriangle,ErdProblems99}, that Conjecture~\ref{c:main} is likely to be difficult as several families of graphs attain the equality in  (\ref{e:conjweak}). Define the \emph{join} $H_1 \vee H_2 \vee \ldots \vee H_k$ of a collection of vertex-disjoint graphs $H_1, H_2, \ldots, H_k$ as the graph obtained from their union by joining for all $i \neq j$ every vertex of $H_i$ to every vertex of $H_j$ by an edge. Puleo~\cite{Puleo15}  have shown that a join of any collection of complete balanced bipartite graphs attains the equality in (\ref{e:conj}) and (\ref{e:conjweak}). 

In this paper we prove Conjecture~\ref{c:main} and, consequently, Conjecture~\ref{c:mainEGT}, and show that the examples provided by Puleo are the only ones.

\begin{thm}\label{t:main}
	For every graph $G$, 
	\begin{equation}\label{e:main}
\alpha_1(G) + \tau_B(G) \leq \frac{|V(G)|^2}{4}.
\end{equation}
Moreover the equality holds if and only if $G$ is a join of a collection of  complete balanced bipartite graphs.
\end{thm}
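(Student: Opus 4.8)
The plan is to proceed by induction on $|V(G)|$, using the observation that both quantities $\alpha_1$ and $\tau_B$ interact well with the join operation. The key identity to establish first is that for vertex-disjoint graphs $H_1,\dots,H_k$ with $G = H_1 \vee \dots \vee H_k$, one has $\tau_B(G) = \tau_B(K_{n_1,\dots,n_k})$ plus a correction — more precisely, a maximum cut of a join is obtained by taking the vertex classes and within each $V(H_i)$ deleting the extra edges; and a maximum triangle-independent set of a join behaves additively across the parts in a controllable way. Rather than chase the join structure directly, I would instead look for a single vertex or a small structure to delete. So I would first handle the case where $G$ has a vertex $v$ of degree less than $n/2$ (here $n = |V(G)|$): deleting $v$ loses at most $n-1$ edges from any triangle-independent set in the trivial bound, but more carefully, $\alpha_1(G) \le \alpha_1(G - v) + \deg(v)$ and $\tau_B(G) \le \tau_B(G-v) + \lfloor \deg(v)/2 \rfloor$ (a max cut of $G-v$ extends by placing $v$ on the side meeting fewer of its neighbors). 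Combining with the induction hypothesis $\alpha_1(G-v) + \tau_B(G-v) \le (n-1)^2/4$ gives $\alpha_1(G) + \tau_B(G) \le (n-1)^2/4 + \deg(v) + \deg(v)/2$, which is at most $n^2/4$ when $\deg(v)$ is sufficiently small — but the constant $3/2$ is too large, so this crude splitting must be refined.

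The real point is that $\alpha_1$ and $\tau_B$ cannot both be large simultaneously when we add back $v$: if a triangle-independent set $T$ uses many edges at $v$, those edges force structure (two neighbors $u,w$ of $v$ with $uw \notin E$, or else $uv, vw$ can't both be in $T$ when $uvw$ is a triangle), and that structure is exactly what lets us cut cheaply. So the core lemma I would isolate is a local trade-off at a minimum-degree vertex: for $v$ of minimum degree $d$, $\alpha_1(G) + \tau_B(G) \le \alpha_1(G-v) + \tau_B(G-v) + (\text{something} \le d + (n-1-d)/2\text{-ish})$, arranged so the induction closes. An alternative, perhaps cleaner, route is to use the complement structure: $\tau_B(G)$ equals $e(G)$ minus the max-cut size $\brm{mc}(G)$, so \eqref{e:main} rewrites as $\alpha_1(G) + e(G) - \brm{mc}(G) \le n^2/4$, i.e. $\alpha_1(G) \le \brm{mc}(G) + (n^2/4 - e(G))$, and the right side is $\brm{mc}(G) + \overline{e}(G) - (\text{deficiency of }e(G)\text{ below }n^2/4)$ — comparing a max triangle-independent set against a max cut plus non-edges. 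One then wants to exhibit, given any triangle-independent $T$, a cut and a set of non-edges covering $T$ with multiplicity, which is a covering/LP-duality statement amenable to a weighting argument.

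For the equality characterization, once the inequality is proved by induction the extremal analysis is forced at each step: equality requires that at every stage the min-degree vertex $v$ has degree exactly $n/2$, that $G-v$ is again extremal (hence a join of complete balanced bipartite graphs by induction), and that $v$'s neighborhood is compatible with extending the join structure — which pins down that $v$ together with a matched partner forms a new $K_{1,1}$ factor of the join or enlarges an existing bipartite part. This requires checking that no "accidental" extremal configuration arises, i.e. that the slack inequalities $\alpha_1(G) \le \alpha_1(G-v) + \deg(v)$ and the cut-extension bound are simultaneously tight only in the join case; I would verify that tightness in the $\alpha_1$ inequality means every maximum triangle-independent set of $G-v$ extends by all of $N(v)$, forcing $N(v)$ to be an independent set or to split evenly across a bipartite part, and then reconcile with tightness of the cut bound.

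I expect the main obstacle to be precisely this coupling: the two naive inequalities for $\alpha_1$ and $\tau_B$ under vertex deletion are each tight for different graphs, so one must prove a combined inequality with a shared "error budget" that is genuinely tight only on joins of balanced complete bipartite graphs. Making that trade-off quantitative — likely via a careful choice of which vertex to delete (not just minimum degree, but one chosen to control the interaction between a fixed optimal $T$ and a fixed optimal cut) and a discharging-style accounting of edges at that vertex — is the crux. A secondary difficulty is that $\alpha_1$ has no clean formula, so all its appearances must be handled through the "at most one edge per triangle" constraint directly, probably by a fractional relaxation or by analyzing the triangle hypergraph locally around the deleted vertex.
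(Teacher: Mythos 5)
There is a genuine gap, and it sits exactly where you locate ``the crux'': the combined trade-off lemma for single-vertex deletion is never proved, and in the form your induction needs it is actually false. Your scheme requires, for some vertex $v$, that
$\alpha_1(G)+\tau_B(G)\le \alpha_1(G-v)+\tau_B(G-v)+\frac{2n-1}{4}$,
since the inductive hypothesis only supplies $(n-1)^2/4$ for $G-v$. Take $G=C_5$: then $\alpha_1(G)=5$, $\tau_B(G)=1$, while for every $v$ the graph $G-v=P_4$ has $\alpha_1+\tau_B=3$, so the loss is $3>\frac{9}{4}$ at every vertex. The theorem still holds for $C_5$ only because $P_4$ has slack below $(n-1)^2/4$, which your induction hypothesis cannot see; so no refinement of the per-vertex accounting (discharging, cleverer choice of $v$, coupling a fixed $T$ with a fixed cut) can close the induction as stated --- the induction hypothesis itself must be strengthened to carry more information than the bare inequality. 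Note also that the extremal graphs (joins of balanced complete bipartite graphs) have minimum degree at least $n/2$, far outside the range $\deg(v)\lesssim n/3$ your crude bound handles, so the entire interesting regime rests on the unproved lemma. The alternative route via $\tau_B=e(G)-\mathrm{mc}(G)$ and an LP/covering statement is only a reformulation; no argument is offered for it. Consequently the equality characterization, which you derive from tightness of these deletion bounds, is also unsupported.

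For comparison, the paper's proof does strengthen the inductive object: it fixes a maximum triangle-independent set $S$ and works with the pair $(G,S)$ (a ``triangle-free trigraph''), generating the bipartition by a recursive randomized procedure that, at each step, picks an edge $uv\in S$ uniformly at random and removes the whole set $N_S(u)\cup N_S(v)$ (assigning it to the two sides), rather than a single vertex. The induction is applied to the remaining trigraph, and the loss is controlled not vertex-by-vertex but on average over all choices of $uv\in S$, which reduces the estimate to identities and two Cauchy--Schwarz inequalities among counts of $4$-vertex configurations in $(V,S)$ and its interaction with the other edges; the equality analysis then reads off when those Cauchy--Schwarz steps are tight. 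The averaging over $S$ and the removal of an entire $S$-neighborhood pair are precisely what absorb the deficit that defeats the single-vertex step in examples like $C_5$.
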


While Conjectures~\ref{c:main} and ~\ref{c:mainEGT} have not received a similar amount of attention to 
Conjecture~\ref{c:bipartite}, several partial results towards Conjecture~\ref{c:main}  have been recently attained. Puleo proved that (\ref{e:main}) holds for triangle-free graphs~\cite{PuleoExtremal15}, and that $\alpha_1(G) + \tau_B(G) \leq 5n^2/16$ holds for every graph $G$ on $n$ vertices~\cite{Puleo15}. Even more recently, Xu~\cite{Xu15} improved this bound to  $\alpha_1(G) + \tau_B(G) \leq 4403n^2/15000$.

Our proof of Theorem~\ref{t:main} follows the general outline of many of the arguments in the area, which were formalized and united in a single framework of flag algebras by Razborov~\cite{FlagAlgebras}. However, we avoid using the flag algebra formalism in this paper. Further, unlike many of the recent results using the flag algebra approach our proof is completely computer-free. While we initially used the computers to verify that our approach can potentially yield the proof of (\ref{e:main}), the final proof was derived by hand.

Let us give a rough sketch of our proof and similar proofs in~\cite{EFPS,SudK4,Puleo15,Xu15}. In all these instances, the goal is to provide an upper bound on $\tau_B(G)$ in terms of other parameters of the graph.  This is done by constructing a partition $(A,B)$ of $V(G)$ such that the number of edges with both ends in $A$ or both ends in $B$, denoted by $\be(A,B)$, is appropriately small. (Clearly, $\tau_B(G)$ equals to the minimum of $\be(A,B)$ taken over all partitions.) The partition achieving the desired bound is constructed in each case using a \emph{local randomized procedure.} 

Rather than attempting to define a local randomized procedure formally, let us provide an example. Erd\H{o}s, Faudree, Pach and Spencer in~\cite{EFPS} proved that $\tau_B(G) \leq n^2/18$ for every triangle-free graph on $n$ vertices, a partial result towards Conjecture~\ref{c:bipartite}. In their proof, the authors of~\cite{EFPS} considered two types of partitions:
\begin{itemize}
	\item  $(N(v),V(G)-N(v))$ for some $v \in V(G)$,
	\item $(N(u) \cup X, N(v) \cup Y)$ for some $uv \in E(G)$, where a partition $(X,Y)$ of $V(G)-N(u)-N(v)$ is chosen uniformly at random
\end{itemize}
(Here $N(v)$ denotes the neighborhood of the vertex $v$ in $G$.) They then proceed to bound the expected size of $\be(A,B)$ of such a partition, where the vertex $v$, and respectively, the edge $uv$, are chosen uniformly at random. The advantage of using  the partitions generated using such local randomized rules, that is partitions, where the part of each vertex depends only on its adjacencies to a small number of the randomly chosen ``fixed'' vertices, is that the resulting expected value can be then expressed in terms of the numbers of configurations induced  in $G$ by a small number of vertices. The relationships between these numbers are in turn derived by using the Cauchy-Schwarz inequality. 

We now describe the procedure we use to generate the partition attaining the bound $\be(A,B) \leq \frac{|V(G)|^2}{4}-\alpha_1(G)$. It is similar to the procedure from~\cite{EFPS} outlined above.  We fix a triangle-independent set $S$ with $|S|=\alpha_1(G)$. We proceed to choose an edge  $uv \in S$ uniformly at random, assign the vertices joined to  $u$ by an edge in $S$  to $A$,  assign the vertices joined to  $v$ by an edge in $S$ to $B$, and extend the partition by recursively applying the same procedure to the subgraph induced by vertices not yet assigned to either $A$ or $B$.

The remainder of the paper is occupied by the analysis of this procedure. In Section~\ref{s:prelim} we formalize the setting in which we will perform the calculations, and derive inequalities and identities used in the proof. In Section~\ref{s:inequality} we prove inequality (\ref{e:main}), and in Section~\ref{s:extremal} we characterize the extremal graphs.  

\section{Proof of Theorem~\ref{t:main}} 

\subsection{Preliminaries}\label{s:prelim}
In the proof of Theorem~\ref{t:main}, we assume that a triangle-independent set $S$ in a graph $G$ on $n$ vertices is fixed and show that there exists a bipartition $(A,B)$ of $G$ such that $\be(A,B)+|S| \leq n^2/4$.
Thus the main object under consideration for us is not the graph $G$,  but a pair $(G,S)$. It is convenient for us to define this pair as a triple $G=(V(G),C(G),S(G))$, such that \begin{itemize}
	\item $(V(G),C(G))$ and $(V(G),S(G))$ are graphs,
	\item $C(G) \cap S(G) = \emptyset$, and
	\item if $uv,uw \in S(G)$ for some $u,v,w \in V(G)$ then $vw \not \in C(G) \cup S(G)$.  
\end{itemize}
We call such a triple $G$ a \emph{triangle-free trigraph}, or ocassionally simply a \emph{trigraph}. Given 
$A,B \subseteq V(G)$, with $A \cap B = \emptyset$, let $e(A,B)$ and $s(A,B)$ denote the number of edges in $C(G) \cup S(G)$, and respectively, in $S(G)$,  with one end in $A$ and the other in $B$, and let $\be(A,B)$ as before denote the number of edges in $C(G) \cup S(G)$ which have both ends in $A$, or both ends in $B$. Finally, let $s(A)$ denote the number of edges in $S(G)$ with both ends in $A$. 

Using the above new notation proving (\ref{e:main}) is equivalent to showing that in every triangle-free trigraph $G=(V,C,S)$ there exists a partition $(A,B)$ of $V$ such that $$\be(A,B)+|S| \leq \frac{|V|^2}{4}.$$ 
Let us repeat the description of the procedure we use to obtain the desired partition, which was outlined in the introduction. For $v \in V$ let $N_S(v):=\{w \in V \:|\: uw \in S\}$ denote the neighborhood of $v$ in the graph $(V,S)$. We generate the parts $A$ and $B$ using a randomized algorithm. We start with $A=B=\emptyset$. At each step of the algorithm if there exists an edge $uv \in S$ such that neither of its ends are assigned to either $A$ or $B$ yet, we choose such an edge in $S$ uniformly at random, assign the neighbors of $u$, which has not been assigned to a part yet to $A$, and assign  the unassigned neighbors of $v$  to $B$. If no such edge exists, we assign the remaining vertices to the parts independently uniformly at random. (See Algorithm~\ref{a:cut}.)

\vskip 20pt
\begin{algorithm}[H]

	\SetKwInOut{Input}{input}\SetKwInOut{Output}{output}
 	\Input{A triangle-free trigraph  $G=(V,C,S)$.}
 	\Output{A partition $(A,B)$ of $V$.}
 	
    $A,B \leftarrow \emptyset$\;
    
 	\While{ $\exists u,v \in V - A - B : uv \in S$}
 	{
 		{\bf choose} such a pair $(u,v)$ uniformly at random\;
 		$A \leftarrow  A \cup (N_S(u)-B)$\;
 		$B \leftarrow  B \cup (N_S(v)-A)$\;
 	}
 	\ForEach{$v \in  V - A - B$}
 	{	{\bf set} either $A \leftarrow A \cup\{v\}$, or
 		$B \leftarrow B \cup\{v\}$  independently uniformly at random.  
    }
 \caption{The cut-generating algorithm}\label{a:cut}
\end{algorithm}
\vskip 20pt

Our proof of Theorem~\ref{t:main} consists of showing that this algorithm on average produces a cut which satisfies the theorem requirements. To characterize the extremal configurations in the new setting we need a few more definitions. A \emph{trigraph $(V, \emptyset, S)$} is \emph{balanced complete bipartite} if the graph $(V,S)$ is \emph{balanced complete bipartite}.  Define a \emph{$C$-join $G=H_1 \vee H_2 \vee \ldots \vee H_k$} of a collection of vertex disjoint graphs $H_1, H_2, \ldots, H_k$  to be obtained from their union by adding an edge joining every vertex of  $H_i$ to every vertex of $H_j$ to  $C(G)$ for all $i \neq j$. We are now ready to state our main technical result, which will imply Theorem~\ref{t:main}.

\begin{thm}\label{t:main2}
Let $G=(V,C,S)$ be a  triangle-free trigraph, and let $(A,B)$ be a random partition of $V(G)$ generated by Algorithm~\ref{a:cut}. Then  
	\begin{equation}\label{e:main2}
	\E[\be(A,B)]+ |S| \leq \frac{|V|^2}{4}.
	\end{equation}
	Moreover, if the equality holds then $G$ is a $C$-join of several complete balanced bipartite trigraphs.
\end{thm}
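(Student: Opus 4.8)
The plan is to analyze Algorithm~\ref{a:cut} by induction on $|V|$, tracking the contribution of the first randomly chosen edge $uv \in S$ together with the recursive contribution from the trigraph $G'$ induced on the still-unassigned vertices. Fix an edge $uv \in S$. Running the first step of the algorithm places $N_S(u)$ into $A$ and $N_S(v) \setminus N_S(u)$ into $B$; since $G$ is a triangle-free trigraph, the set $N_S(u)$ is independent in $(V,C\cup S)$ and likewise for $N_S(v)$, and moreover $N_S(u) \cap N_S(v) = \emptyset$ by the third axiom of trigraphs. Let $W = V \setminus (N_S(u) \cup N_S(v))$ and let $G'$ be the induced sub-trigraph on $W$. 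Then $\be(A,B)$ decomposes as: the edges of $C\cup S$ internal to $W$ that end up monochromatic, plus edges with exactly one end in $N_S(u)\cup N_S(v)$ that land monochromatically, plus edges with both ends in $N_S(u)\cup N_S(v)$ — and the latter are all ``good'' (bichromatic) since $u,v$ contribute their $S$-neighborhoods to opposite sides and each side is internally edge-free. The first step thus ``saves'' all of $|S|$'s edges incident to $u$ or $v$ plus the $C$-edges between $N_S(u)$ and $N_S(v)$, at the cost of the edges from $N_S(u)\cup N_S(v)$ into $W$ that get placed on the wrong side. Averaging over the random choice of $uv$ and applying the inductive bound $\E[\be_{G'}(A',B')] + |S'| \le |W|^2/4$ to the recursive call, the theorem reduces to an inequality purely among the counts $|V|$, $|S|$, $\sum_{uv\in S}(|N_S(u)|+|N_S(v)|)$, $\sum_{uv \in S} e(N_S(u), N_S(v))$, and $\sum_{uv\in S}e(N_S(u)\cup N_S(v), W)$.

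The next step is to convert this into a clean quadratic inequality. Writing $d_S(x) = |N_S(x)|$ and using $\sum_{x} d_S(x) = 2|S|$, the key identities are $\sum_{uv \in S}(d_S(u)+d_S(v)) = \sum_x d_S(x)^2$ (each vertex $x$ is counted once for every $S$-edge at $x$), and a corresponding expression for the ``bad'' edges into $W$. The quantity $e(N_S(u), N_S(v))$ counts, roughly, the $C$-edges forming a $4$-cycle-like pattern with the edge $uv$, and by triangle-freeness every edge of $C\cup S$ incident to $N_S(u)\cup N_S(v)$ either goes between the two neighborhoods (good) or into $W$. I expect the inductive inequality, after substituting $|W| = |V| - d_S(u) - d_S(v)$ and expanding $|W|^2/4$, to collapse — using convexity of $x\mapsto x^2$ and Cauchy--Schwarz applied to the sequence $(d_S(x))_{x\in V}$ — to something of the form $|S| + \frac{1}{|S|}\sum_{uv\in S}(\text{stuff}) \le |V|^2/4$, where the main term is controlled by $\left(\sum_x d_S(x)\right)^2 \le |V|\sum_x d_S(x)^2$ or a similar second-moment estimate.

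For the base case, when $S = \emptyset$ the algorithm assigns all vertices independently at random, so $\E[\be(A,B)] = \frac{1}{2}|C|$, and we need $\tfrac12|C| \le |V|^2/4$, i.e. $|C| \le |V|^2/2$, which is immediate since $G$ is simple. (One should also note the edge case $|V| \le 1$.) Equality analysis then proceeds by asking when every inequality used is tight: the Cauchy--Schwarz step forces $d_S(x)$ to be constant on the support of $S$; tightness in the ``no wrong edges into $W$'' estimate forces $N_S(u)\cup N_S(v)$ to have no $C\cup S$ edges to $W$ other than those forced, which propagated through the recursion should force the $S$-edges to organize into a complete balanced bipartite trigraph on one "block" that is $C$-joined to the rest; and tightness in the base case forces $C$ to be complete between blocks. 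Assembling these constraints and checking that they force exactly the $C$-join of complete balanced bipartite trigraphs structure is where the real work lies.

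The main obstacle I anticipate is the equality characterization rather than the inequality itself: the inequality should fall out of convexity and Cauchy--Schwarz once the right quadratic is written down, but extracting the precise extremal structure requires carefully chasing which of several simultaneous tightness conditions must hold at every level of the recursion, and ruling out degenerate near-extremal configurations (for instance, isolated vertices, or $S$-components that are balanced bipartite but not joined by a complete $C$-pattern to the rest). A secondary subtlety is handling the randomness correctly: the recursive sub-trigraph $G'$ depends on the random first edge, so the inductive hypothesis must be applied conditionally and the bookkeeping of which edges are "already counted as good" versus "deferred to the recursion" must be done without double-counting.
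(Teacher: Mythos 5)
Your overall framework is the same as the paper's: induction on $|V|$, conditioning on the first edge $uv \in S$ chosen by Algorithm~\ref{a:cut}, applying the inductive hypothesis to the trigraph induced on $Z = V - N_S(u) - N_S(v)$, and observing that each $C \cup S$ edge between $N_S(u) \cup N_S(v)$ and $Z$ becomes monochromatic with probability $1/2$; this is precisely the chain (\ref{e:induction})--(\ref{e:local}). The genuine gap is in the step where you expect the averaged inequality to ``collapse'' via convexity of $x \mapsto x^2$ and the second-moment bound $\left(\sum_x d_S(x)\right)^2 \le |V| \sum_x d_S(x)^2$. The quantities you must control after averaging over $uv \in S$, namely $\tfrac12 e(N_S(u)\cup N_S(v),Z) + s(N_S(u)\cup N_S(v),Z)$ and $s(N_S(u),N_S(v))$, are not functions of the degree sequence of $(V,S)$: they are counts of four-vertex configurations involving $C$-edges and non-edges. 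The paper bounds them with two Cauchy--Schwarz inequalities taken at the level of pairs of vertices, $P_4(G)+C_4(G) \le K_{1,3}(G)$ and, crucially, $P_4(G) \le D(G)$ (Lemma~\ref{l:CS}), where $D$ counts a configuration with prescribed non-adjacencies; the second inequality, which is what actually controls the edges leaving $N_S(u)\cup N_S(v)$ into $Z$ and where triangle-freeness enters, has no analogue in your degree-based plan. In addition, a substantial identity (Lemma~\ref{l:bigsum}) is needed to reorganize the averaged right-hand side of (\ref{e:local}) into $|V|^2|S| + (P_4+C_4-K_{1,3}) + 2(P_4-D) - 2R$ with $R(G)\ge 0$, and deriving it is a real part of the work. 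Degree moments alone cannot suffice: the trivial estimate $e(N_S(u)\cup N_S(v),Z) \le (d_S(u)+d_S(v))|V|$ is off by a factor of about $2$ from what the deficit $\tfrac14(|V|^2-|Z|^2)$ can absorb, so the non-edge structure must be exploited, which your proposed tools do not see.

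The equality characterization in your sketch is likewise only a plan, and you acknowledge that ``assembling these constraints'' is the real work. In the paper this step uses exactly the tightness of the two inequalities of Lemma~\ref{l:CS} and the vanishing of $R(G)$, yielding three pointwise conditions (equal $S$-degrees across $S$-edges, and the vanishing of the $D$-type and $R$-type configurations), followed by a shortest-path argument showing $C=\emptyset$ inside an $S$-component and then $N_S(u)\cup N_S(v)=V$ for every $uv\in S$. Since your proposal never isolates the configurations $D$ and $R$, you do not have access to the tightness conditions that drive this analysis, so both the inequality and the extremal characterization remain unproved as written.
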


As noted above, one can derive most of the claims of Theorem~\ref{t:main} by applying Theorem~\ref{t:main2} to the trigraph 
$(V(G),E(G)-S,S)$ for any triangle-independent set $S \subseteq E(G)$ with $|S|= \alpha_1(G)$. It remains only to show that if a graph $G$ is a join of a collection of complete bipartite graphs then $G$ satisfies (\ref{e:main}) with equality. More precisely, it remains to show that $\alpha_1(G) + \tau_B(G) \geq \frac{|V(G)|^2}{4}$ holds for such a graph $G$. This was already observed by Puleo~\cite{Puleo15}, but let us repeat the necessary short argument for completeness. Let $G = K_{t_1,t_1} \vee \ldots \vee K_{t_k,t_k}$. Then 
\begin{equation}\label{e:alpha1}
\alpha_1(G) \geq t^2_1+\ldots+t^2_k,
\end{equation}
as the edges of the original bipartite graphs form a triangle independent set. Also,
\begin{equation}\label{e:EG}
 |E(G)|= 2(t_1+\ldots+t_k)^2 -t^2_1-\ldots-t^2_k.
\end{equation} 
 Moreover, \begin{equation}\label{e:tauB}
 \tau_B(G) \geq |E(G)|-\frac{|V(G)|^2}{4},
\end{equation}
for any graph $G$ as the bipartite graph on $|V(G)|$ vertices has at most $\frac{|V(G)|^2}{4}$ edges by Mantel's theorem. Combining (\ref{e:alpha1}), (\ref{e:EG}) and (\ref{e:tauB}) yields the desired inequality.
 
 \vskip 10pt
It remains to prove Theorem~\ref{t:main2} and the rest of the section is occupied by the proof. It proceeds by exploring the relations between the  numbers of several 4-vertex configurations in $G$. In the following definitions, we will use the characteristic functions of $C$ and $S$. We define $c:V^2 \to \{0,1\}$ by $c(uv)=1$ if $uv \in C$, and $c(uv)=0$ otherwise. (For brevity here and below we will write $f(uv)$ rather than $f(u,v)$ for the value of a function $f$ defined on $V^2$.)
Analogously,  define $s:V^2 \to \{0,1\}$ by $s(uv)=1$ if $uv \in S$, and $s(uv)=0$, otherwise. Finally, let  $n:V^2 \to \{0,1\}$, defined by $n(uv)=1-c(uv)-s(uv)$, be the characteristic function of non-edges. (Note that $n(vv)=1$ for every $v \in V(G)$.) 
Let $$P_4(G)=\sum_{(u,v,w,x) \in V^4} s(uv)s(vw)s(wx)(n(xu)+c(xu)),$$
    $$C_4(G)=\sum_{(u,v,w,x) \in V^4} s(uv)s(vw)s(wx)s(xu),$$
    $$K_{1,3}(G)=\sum_{(u,v,w,x) \in V^4} s(uv)s(uw)s(ux),$$ 
    $$D(G)=\sum_{(u,v,w,x) \in V^4} (n(uv)+c(uv))s(uw)s(ux)n(vw)n(vx).$$
    $$R(G)=\sum_{(u,v,w,x) \in V^4}s(uv)s(uw)n(wx)c(vx).$$
The names for the first three of the above quantities reflect the types of objects counted by them, e.g. $P_4(G)$ is proportional to the number of induced subgraphs isomorphic to $P_4$ in the graph $(V,S)$. For convenience of the reader the quadruples counted by each of the above functions are shown in Figure~\ref{f:fig1}. (In the figure, the edges in $S$ are indicated by thick lines, the edges in $C$ by thin lines, and pairs of vertices that can not be adjacent by dashed lines. When several possibilities are allowed for a pair of vertices, parallel edges are drawn.) 
\begin{figure}
	\begin{center}
		\includegraphics[scale=1]{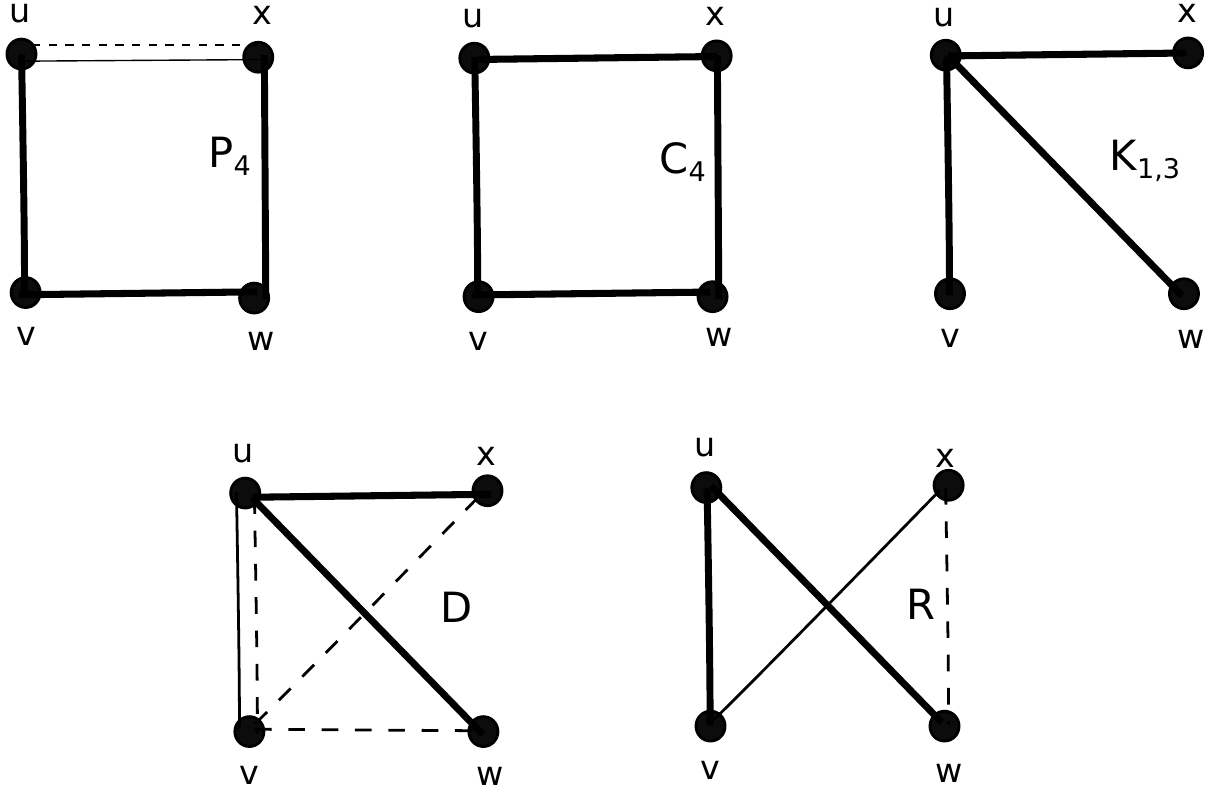}%
	\end{center}
	\caption{Configurations counted in the proof of Theorem~\ref{t:main2}.}
	\label{f:fig1}
\end{figure}

The following technical lemma is proved via a couple of  standard applications of the Cauchy-Schwarz inequality.

\begin{lem}\label{l:CS} Let $G=(V,C,S)$ be a  triangle-free trigraph, then
	\begin{equation}\label{e:CS-s}
	P_4(G)+C_4(G) \leq K_{1,3}(G)
	\end{equation}
	and
	\begin{equation}\label{e:CS-n}
	P_4(G) \leq D(G).
	\end{equation}
\end{lem}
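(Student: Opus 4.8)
The plan is to prove each of the two inequalities by a single Cauchy--Schwarz application, after first rewriting $P_4(G)$, $C_4(G)$, $K_{1,3}(G)$ and $D(G)$ as sums over the ``middle'' pair of vertices of a product of two terms that are symmetric in the remaining two vertices. For \eqref{e:CS-s}, note that $C(G) \cup S(G) \cup N(G)$ partitions $V^2$, so $n(xu) + c(xu) = 1 - s(xu)$ for distinct $x,u$; using triangle-freeness of the $S$-graph, two vertices $w,u$ each adjacent in $S$ to a common vertex $v$ satisfy $s(wu) = 0$ anyway, so in fact $P_4(G) + C_4(G) = \sum s(uv)s(vw)s(wx)$ summed over all quadruples (the $s(xu)$ and $(n+c)(xu)$ cases together cover every possibility for the pair $xu$, and the remaining possibility, if any, is excluded). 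Hence
\[
P_4(G) + C_4(G) = \sum_{(v,w) \in V^2} \Bigl(\sum_{u} s(uv)s(vw)\Bigr)\Bigl(\sum_{x} s(vw)s(wx)\Bigr)\Big/ s(vw),
\]
which one rewrites cleanly as $\sum_{v,w} s(vw)\, d_S(v)\, d_S(w)$ where $d_S(v) = \sum_u s(uv)$; meanwhile $K_{1,3}(G) = \sum_v d_S(v)^3 = \sum_{v,w} d_S(v)^2\, [\text{something}]$. The cleanest route is: $K_{1,3}(G) = \sum_v d_S(v)^3$ and $P_4(G)+C_4(G) = \sum_{vw \in S} d_S(v) d_S(w)$ (each edge counted in both directions); then by Cauchy--Schwarz (or just AM--GM, $d_S(v)d_S(w) \le \tfrac12(d_S(v)^2 + d_S(w)^2)$) applied edgewise,
\[
\sum_{vw \in S} d_S(v)d_S(w) \le \sum_{vw \in S} \tfrac12\bigl(d_S(v)^2 + d_S(w)^2\bigr) = \sum_{v} d_S(v)^3 = K_{1,3}(G),
\]
since each $v$ contributes $d_S(v)^2$ once for each of its $d_S(v)$ incident $S$-edges. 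This gives \eqref{e:CS-s}.

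For \eqref{e:CS-n}, I would again organize both sums around the vertex $u$ together with the pair $\{w,x\}$ forming a ``cherry'' $s(uw)s(ux)$ in $S$. Write $P_4(G) = \sum_{(u,w,x)} s(uw)s(ux)\bigl(\sum_v s(vw)(n(xv)+c(xv))\bigr)$ — wait, the $P_4$ count has the path $u\!-\!v\!-\!w\!-\!x$, so after relabelling so that the cherry is at a fixed vertex, $P_4(G) = \sum_{(u,v,w,x)} s(vu)s(uw) \cdot (\text{stuff on } w,x)$; the right normalization is to sum over the apex $u$ of the cherry $\{v,w\}$ with $s(uv)s(uw)=1$, times a function measuring, for the pendant vertices, the number of ways to attach. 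Concretely: for fixed $u$ and a fixed neighbor (in $S$) $w$ of $u$, let $a(u,w) = \sum_x s(uw)\cdots$ — the honest approach is to note $P_4(G) = \sum_{v,w} s(vw) \cdot \bigl(\text{number of common configurations}\bigr)$ and $D(G)$ has the $v$-vertex playing the role of a non-neighbor of both pendants. I expect the right pairing to be: for each ordered pair $(w,x)$, $P_4$ contributes $\bigl(\sum_u s(wu)s(ux)\bigr)$ hmm, $\sum_u$ of $s$ on $wu$ and $ux$... The bottom line is that both $P_4$ and $D$ can be written as $\sum_{u} (\text{cherry count at }u) \times (\text{attachment count})$ or, after Cauchy--Schwarz across the apex, that $P_4(G)^2 \le (\text{const})\cdot C_4(G)\cdot D(G)$ would be the wrong shape — so instead I anticipate the direct pointwise bound: for each quadruple contributing to $P_4$, the pair $xu$ is a non-edge or $C$-edge, i.e. $n(xu)+c(xu)=1$, and we must produce a quadruple of $D$-type. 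Fixing the cherry $s(uw)s(ux)$ at apex $u$ (relabelled), $P_4$ counts pairs $(v : s(vw) = 1, (n+c)(vx))$ wait I need $s$ on three consecutive edges.

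Let me restate cleanly. Relabel so the two pendant edges of the star/path share the apex. In $D(G)$ the apex is $u$ with pendants $w,x$ ($s(uw)s(ux)=1$), the fourth vertex $v$ is a non-$S$-neighbor of $u$ (since $(n+c)(uv)$) and a non-neighbor of both $w$ and $x$. In $P_4(G)$, rewriting the path $a\!-\!b\!-\!c\!-\!d$ with $s(ab)s(bc)s(cd)=1$ and $(n+c)(da)$: set apex $= b$, pendants $= a,c$, fourth vertex $=d$; then $s(ab)s(bc) = 1$ is the cherry at $b$, $d$ satisfies $s(cd)=1$ and $(n+c)(ad)=1$. Now for fixed apex $b$ and fixed cherry-leaf $c$, $P_4$ counts pairs $(a,d)$ with $s(ab)=1$, $s(cd)=1$, $(n+c)(ad)=1$, whereas $D$ (with apex $b$, leaves $a,c$, fourth $d$) counts pairs... this still is not a clean Cauchy--Schwarz. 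The genuinely clean statement, which I would aim for, is:
\[
P_4(G) = \sum_{b \in V} \sum_{\substack{a,c\, :\, s(ab)=s(bc)=1}} \bigl(\,\#\{d : s(cd)=1,\ (n+c)(ad)=1\}\,\bigr),
\]
and by Cauchy--Schwarz in the apex, or rather by a direct injection/counting identity, one compares this with $D(G) = \sum_b \#\{(a,c,d) : s(ab)=1,(n+c)(bc)=1, s(bd)\text{...}\}$ — I realize \eqref{e:CS-n} is most likely proved by writing $P_4(G) = \langle f, g\rangle$ and $D(G) = \|f\|^2$ (or $\|g\|^2$) for suitable vectors $f,g$ indexed by $V^2$, with $\langle f,f\rangle \le \langle f,g\rangle$ forced by triangle-freeness making $g - f$ coordinatewise nonnegative. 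So the plan for \eqref{e:CS-n} is: exhibit $f, g \colon V^2 \to \mathbb{Z}_{\ge 0}$ with $P_4(G) = \sum_{p} f(p)g(p)$, $D(G) = \sum_p g(p)^2$, and $f(p) \le g(p)$ for all $p$ (the last using that $S$-neighbors of a common vertex are pairwise non-adjacent, so various $s$-terms vanish and the ``$n$'' in $D$ dominates the ``$n+c$'' in $P_4$), whence $P_4 = \sum fg \le \sum g^2 = D$.

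\textbf{Main obstacle.} The routine part is the algebra of re-indexing these $V^4$-sums; the delicate part — and the step I would be most careful about — is choosing the correct grouping of variables (which two vertices form the ``pivot'' pair $p$, and what $f(p), g(p)$ should be) so that the needed coordinatewise inequality $f \le g$ holds, and verifying that inequality genuinely uses the triangle-freeness hypothesis on the $S$-graph (the third trigraph axiom), since without it the pendant vertices of a cherry could be $S$-adjacent and the comparison would fail. I expect \eqref{e:CS-s} to fall out immediately from edgewise AM--GM as sketched above, and essentially all the real content of the lemma to be in pinning down the bookkeeping for \eqref{e:CS-n}.
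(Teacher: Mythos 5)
Your handling of (\ref{e:CS-s}) is correct and is in substance the paper's own argument: writing $P_4(G)+C_4(G)=\sum_{(v,w)\in V^2}s(vw)d_S(v)d_S(w)$ and $K_{1,3}(G)=\sum_v d_S(v)^3$ (where $d_S(v)=|N_S(v)|$) and applying $d_S(v)d_S(w)\le\tfrac12(d_S(v)^2+d_S(w)^2)$ edgewise is exactly the paper's expansion of $\sum_{(u,v)}s(uv)\bigl(\sum_w(s(uw)-s(vw))\bigr)^2\ge 0$; neither version needs triangle-freeness for this half.

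For (\ref{e:CS-n}), however, the plan you finally commit to has a genuine gap. You propose functions $f,g$ on $V^2$ with $P_4(G)=\sum_p f(p)g(p)$, $D(G)=\sum_p g(p)^2$ and $f\le g$ \emph{coordinatewise}. The natural pivot is an ordered pair $(u,x)$ with $n(ux)+c(ux)=1$: putting $A_{ux}=\{w: s(uw)n(xw)=1\}$ one has $D(G)=\sum_{(u,x)}(n(ux)+c(ux))|A_{ux}|^2$, and triangle-freeness gives that the $P_4$-mass attached to $(u,x)$ is the number of $S$-edges between $A_{ux}$ and $A_{xu}$. But the coordinatewise comparison fails: take $C=\emptyset$ and $S=\{ua\}\cup\{xb_1,\dots,xb_5\}\cup\{ab_1,\dots,ab_5\}$, a valid triangle-free trigraph; at the ordered pair $(u,x)$ the $P_4$-mass is $5$ while $|A_{ux}|^2=1$. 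So no combinatorially meaningful realization of ``$f\le g$ pointwise'' exists, and your heuristic that ``the $n$ in $D$ dominates the $n+c$ in $P_4$'' does not produce the inequality. What is actually needed is a comparison \emph{across the two orientations} of the pivot pair: the $P_4$-mass at $(u,x)$ is at most $|A_{ux}|\,|A_{xu}|\le\tfrac12\bigl(|A_{ux}|^2+|A_{xu}|^2\bigr)$, a genuine Cauchy--Schwarz/AM--GM step, not a domination. This is precisely what the paper does: it expands $\sum_{(u,v)}(n(uv)+c(uv))\bigl(\sum_w(s(uw)n(vw)-s(vw)n(uw))\bigr)^2\ge 0$ to get $D(G)\ge\sum_{(u,v)}(n(uv)+c(uv))|A_{uv}|\,|A_{vu}|$, and then inserts the factor $s(wx)$ into the cross term and uses triangle-freeness to identify the result with $P_4(G)$. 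You correctly located where triangle-freeness must enter, but without this cross-orientation step your argument for (\ref{e:CS-n}) does not close.
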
	

\begin{proof}
We start by proving (\ref{e:CS-s}). Note that 
$$P_4(G)+C_4(G) = \sum_{(u,v,w,x) \in V^4}s(uv)s(vw)s(wx).$$
We have
\begin{align*}
0 &\leq \sum_{(u,v) \in V^2}\left(s(uv) \sum_{w \in V}(s(uw) -s(vw))^2 \right) \\&=  \sum_{(u,v) \in V^2}\left(s(uv) \sum_{w \in V}(s(uw) -s(vw)) \sum_{x \in V}(s(ux) -s(vx))  \right) \\ &=
\sum_{(u,v,w,x) \in V^4}(s(uv)s(uw)s(ux) +s(vu)s(vw)s(vx)) \\ &-
\sum_{(u,v,w,x) \in V^4}(s(wv)s(vu)s(ux) +s(wu)s(uv)s(vx)) \\ &=
2(K_{1,3}(G) - P_4(G)-C_4(G)),
\end{align*}	
as desired.

The proof of (\ref{e:CS-n}) is similar:
\begin{align*}
0 &\leq \sum_{(u,v) \in V^2}\left((n(uv)+c(uv)) \sum_{w \in V}(s(uw)n(vw) -s(vw)n(uw)) \sum_{x \in V}(s(ux)n(vx) -s(vx)n(ux))  \right) \\ 
&\leq 2D(G) - \sum_{(u,v,w,x) \in V^4}(n(uv)+c(uv))s(uw)n(vw)s(vx)n(ux)s(xw)\\ & \qquad \qquad - \sum_{(u,v,w,x) \in V^4}(n(uv)+c(uv))s(vw)n(uw)s(ux)n(vx)s(xw)  \\&
= 2D(G) - \sum_{(u,v,w,x) \in V^4} (n(uv)+c(uv))s(uw)s(vx)s(xw) \\& \qquad \qquad - \sum_{(u,v,w,x) \in V^4}(n(uv)+c(uv))s(vw)s(ux)s(xw) = 
2D(G) -2 P_4(G),
\end{align*}
as desired. Note that in the inequalities above, we used the fact that $G$ is triangle-free, and thus $s(uv)s(vw)=s(uv)s(vw)n(uw)$ for all $(u,v,w) \in V^3$.
\end{proof}

The proof of (\ref{e:main2}) involves, besides the inequalities of Lemma~\ref{l:CS}, several identities, which require sums similar to those considered in the proof of Lemma~\ref{l:CS}. These identities are typically routine to verify. However, we will derive the most unwieldy of them separately in the next lemma.

\begin{lem}\label{l:bigsum} Let $G=(V,C,S)$ be a  triangle-free trigraph, then
	\begin{align} 
	&\sum_{(u,v,w,x) \in V^4}s(uv)c(wx)(s(uw)+s(vw))(1-s(ux)-s(vx))  \notag\\ &+ \frac 1 2 \sum_{(u,v,w,x) \in V^4}s(uv)(1-s(uw)-s(vw)) (1-s(ux)-s(vx)) \notag\\&= |V|^2|S|-3P_4(G)-C_4(G)-K_{1,3}(G) -2D(G) - 2R(G).\label{e:fbound2}
	\end{align} 
\end{lem}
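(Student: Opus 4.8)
## Proof Proposal

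The plan is to expand the left-hand side directly and match it, term by term, against the definitions of $P_4(G)$, $C_4(G)$, $K_{1,3}(G)$, $D(G)$, and $R(G)$, using the substitution $n(uv) = 1 - c(uv) - s(uv)$ throughout to rewrite every factor of the form $(1-s(ux)-s(vx))$ in a way that exposes non-edges and $C$-edges. The only genuine tools needed beyond bookkeeping are the triangle-freeness relations $s(uv)s(vw) = s(uv)s(vw)n(uw)$ (so that any two $S$-edges sharing a vertex force the third pair to be a non-edge) and $s(uv)c(uv) = 0$, $s(uv)s(uw)s(vw) = 0$, etc. These let us replace awkward products like $s(uv)s(uw)(1-s(ux)-s(vx))$ by cleaner ones.

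First I would handle the second sum $\tfrac12\sum s(uv)(1-s(uw)-s(vw))(1-s(ux)-s(vx))$. Expanding the product of the two trinomials gives nine terms; the ones with $s(uv)$ times a single factor sum to $|V|^2|S|$ minus multiples of $K_{1,3}(G)$ and $P_4(G)$ (here the vertex $w$ or $x$ ranges freely, contributing a factor $|V|$ when it is not constrained), and the cross terms $s(uv)s(uw)s(ux)$, $s(uv)s(uw)s(vx)$, $s(uv)s(vw)s(vx)$, $s(uv)s(vw)s(ux)$ reassemble into combinations of $K_{1,3}(G)$, $P_4(G)$, and $C_4(G)$ after using triangle-freeness to note, e.g., that $s(uv)s(vw)$ already forces the $uw$-pair, so $s(uv)s(uw)s(vw)=0$ drops terms. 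I would collect the coefficient of each configuration carefully, keeping in mind the symmetry $(u,v)\leftrightarrow(v,u)$ which identifies $P_4$-type quadruples counted with either orientation.

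Next I would expand the first sum $\sum s(uv)c(wx)(s(uw)+s(vw))(1-s(ux)-s(vx))$. After distributing, the "$1$" part gives $\sum s(uv)c(wx)(s(uw)+s(vw))$, and by symmetry between $u$ and $v$ this is $2\sum s(uv)s(uw)c(wx)$. Replacing $c(wx)$ by $1 - n(wx) - s(wx)$, the full free-$x$ term contributes $2|V|$ times the number of $S$-cherries, i.e. something proportional to $K_{1,3}(G)$; the $s(wx)$ piece gives $C_4$- and $P_4$-type terms; and the $n(wx)$ piece is exactly $-2R(G)$ by the definition of $R$ (matching the pattern $s(uv)s(uw)n(wx)c(vx)$ after a relabeling, together with the triangle-free forced non-edge). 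The remaining terms $-\sum s(uv)c(wx)(s(uw)+s(vw))(s(ux)+s(vx))$ split into pieces where $w,x$ are each joined by $S$-edges to the base edge $uv$; using triangle-freeness ($s(uw)s(ux)$ forces $n(wx)$, contradicting $c(wx)$, so those vanish) only the mixed terms $s(uw)s(vx)$ and $s(vw)s(ux)$ survive, and these are precisely $C_4$-type or $P_4$-type quadruples with a $C$-edge on the fourth pair, contributing to the $-3P_4(G)-C_4(G)$ and to $-2D(G)$ once the $D$-configuration (a cherry at $u$ with the two leaves non-adjacent to $v$) is recognized.

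The main obstacle will be the $D(G)$ term: unlike the others, $D$ counts a configuration with two prescribed non-edges ($n(vw)$ and $n(vx)$) in addition to a prescribed $S$-cherry and a $(n+c)$-pair, so it does not appear "on the nose" anywhere in the left-hand side. It must emerge by combining leftover terms of the form $\sum(n(uv)+c(uv))s(uw)s(ux)$ (free in $v$, hence a factor of... no — $v$ is constrained by the $(n+c)$-pair, so this is genuinely a count of $S$-cherries with one extra marked vertex) and subtracting off the "bad" sub-cases where $v$ coincides in adjacency type with $w$ or $x$; triangle-freeness again forces $n(vw)$ and $n(vx)$ once $vu$ is a non-edge or $C$-edge and $uw,ux\in S$ — wait, that is not automatic, so the $-2D(G)$ must be assembled from exactly the terms where those non-edges are *not* yet forced, which requires care. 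I would therefore track the $D$-contribution last, after everything else is pinned down, and verify the final coefficients ($3$ on $P_4$, $1$ on $C_4$, $1$ on $K_{1,3}$, $2$ on $D$, $2$ on $R$, and $|V|^2|S|$ as the leading term) by a consistency check: setting $C=\emptyset$ should recover the purely-$S$ identity, and testing on a single edge $S=\{uv\}$ with $n=2$ should give $4-0-0-0-0-0$ on both sides.
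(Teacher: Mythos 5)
Your strategy---expand both sums over ordered $4$-tuples and regroup, via $s+c+n=1$ and triangle-freeness, into the five counted quantities---is essentially the paper's computation (the paper simply combines the two sums first, writing $1-c(wx)(1-s(ux)-s(vx))=n(wx)+s(wx)+c(wx)(s(ux)+s(vx))$, which is why it never has to confront the cancellation mentioned next). But your bookkeeping goes wrong exactly where the content of the lemma lies. First, the terms with a free vertex, namely $\sum s(uv)(s(uw)+s(vw))$ from the second sum and $2\sum s(uv)s(uw)$ from the first, each equal $2|V|\sum_{u\in V}\deg_S(u)^2$; this is neither ``a multiple of $K_{1,3}(G)$ and $P_4(G)$'' nor ``proportional to $K_{1,3}(G)$'' (it involves $\sum_u\deg_S(u)^2$, not $\sum_u\deg_S(u)^3$), and it is not expressible through the five quantities at all---it must cancel between the two sums, a step your plan never records. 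Second, the piece $-2\sum s(uv)s(uw)n(wx)$ is not ``exactly $-2R(G)$'': the pair $vx$ is unconstrained there. Splitting it as $s(vx)+n(vx)+c(vx)$ gives $-2\sum s(uv)s(uw)s(vx)n(wx)-2\sum s(uv)s(uw)n(vx)n(wx)-2R(G)$, and splitting the still-free pair $ux$ in the middle sum, with triangle-freeness forcing $s(uv)s(ux)n(vx)=s(uv)s(ux)$ and $s(uw)s(ux)n(wx)=s(uw)s(ux)$, yields $\sum s(uv)s(uw)n(vx)n(wx)=K_{1,3}(G)+D(G)$. This is precisely where the $-K_{1,3}(G)$ and $-2D(G)$ on the right-hand side come from; your suggestion that $-2D(G)$ arises from the surviving mixed terms $-2\sum s(uv)c(wx)s(uw)s(vx)$ is incorrect (those are the $P_4$-quadruples whose closing pair lies in $C$, and together with the other pieces they merely complete the coefficient $-3$ of $P_4(G)$). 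Deferring $D$ to a final ``consistency check'' is not a proof here: the identity \emph{is} these coefficients.

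Your proposed test case also reveals a misconception that would derail the computation: all sums run over ordered tuples in $V^4$ with repetitions allowed, so for a single $S$-edge on two vertices $a,b$ one has $K_{1,3}(G)=\sum_u\deg_S(u)^3=2$ and $C_4(G)=2$ (from the tuples $(a,b,a,b)$ and $(b,a,b,a)$), the left-hand side equals $0$ (since $1-s(uw)-s(vw)=0$ for both choices of $w$), and the right-hand side is $4-2-2=0$, not $4-0-0-0-0-0$. These degenerate tuples are not noise to be ignored; they are exactly why $\sum s(uv)s(uw)n(vx)n(wx)$ contains a full $K_{1,3}(G)$ term. As written, then, the proposal has genuine gaps, and the central regrouping would have to be redone correctly to obtain the stated identity.
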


\begin{proof} We have \begin{align}
	\frac 1 2 &\sum_{(u,v,w,x) \in V^4}s(uv)(1-s(uw)-s(vw)) (1-s(ux)-s(vx)) 
	\notag \\&= \frac12  \sum_{(u,v,w,x) \in V^4}s(uv) - \sum_{(u,v,w,x) \in V^4}s(uv)(s(uw)+s(vw)) \notag \\&+ \frac 1 2 \sum_{(u,v,w,x) \in V^4}s(uv)(s(uw)+s(vw))(s(ux)+s(vx)) \notag \\&=  |S||V|^2  -  \sum_{(u,v,w,x) \in V^4}s(uv)(s(uw)+s(vw)) + P_4(G)+C_4(G)+K_{1,3}(G).
	\end{align}
Thus (\ref{e:fbound2}) can be rewritten as 
 	\begin{align} 
	&\sum_{(u,v,w,x) \in V^4}s(uv)(s(uw)+s(vw)) \notag\\&- \sum_{(u,v,w,x) \in V^4}s(uv)c(wx)(s(uw)+s(vw)) (1-s(ux)-s(vx)) \notag\\&= 4P_4(G)+2C_4(G)+2K_{1,3}(G)+2D(G) + 2\sum_{(u,v,w,x) \in V^4}s(uv)s(uw)n(wx)c(vx).\label{e:fbound3}
	\end{align} 
We have 	\begin{align} 
&\sum_{(u,v,w,x) \in V^4}s(uv)(s(uw)+s(vw)) \notag\\&- \sum_{(u,v,w,x) \in V^4}s(uv)c(wx)(s(uw)+s(vw)) (1-s(ux)-s(vx)) \notag\\&= \sum_{(u,v,w,x) \in V^4}s(uv)(s(uw)+s(vw))(n(wx)+s(wx)+c(wx)s(ux)+c(wx)s(vx))
\label{e:fbound4}
\end{align} 
Further,
\begin{equation}\label{e:aux1}
\sum_{(u,v,w,x) \in V^4}s(uv)(s(uw)+s(vw))s(wx) = 2P_4(G)+2C_4(G),
\end{equation}	
and 	
\begin{align}\label{e:aux2}
&\sum_{(u,v,w,x) \in V^4}s(uv)(s(uw)+s(vw))(n(wx)+c(wx)s(ux)+c(wx)s(vx))\notag \\ &= 2\sum_{(u,v,w,x) \in V^4}s(uv)s(uw)(n(wx)+c(wx)s(vx)) \notag \\
&= 2\sum_{(u,v,w,x) \in V^4}s(uv)s(uw)s(vx)(n(wx)+c(wx))\notag \\&+ 2\sum_{(u,v,w,x) \in V^4}s(uv)s(uw)n(vx)n(wx)\notag\\&+2\sum_{(u,v,w,x) \in V^4}s(uv)s(uw)c(vx)n(wx) \notag\\&= 2P_4(G) + 2(2K_{1,3}(G)+D(G)) +2R(G).
\end{align}
Finally, (\ref{e:fbound3}) follows by substituting (\ref{e:aux1}) and (\ref{e:aux2}) into (\ref{e:fbound4}), as desired. 
\end{proof}

\subsection{Proof of (\ref{e:main2}).}\label{s:inequality}
We prove the inequality (\ref{e:main2}) by induction on $|V|$. The base case $|V|=0$ is trivial.

For the induction step, note that if $|S|= 0$ then $\E[\be(A,B)]= |C|/2 \leq |V|^2/4$, and so we assume that $|S| \neq 0$.
Consider for the moment a single edge $uv \in S$.
Let $\E_{uv}[\be(A,B)]$ denote the expected value of $\be(A,B)$ subject to the edge $uv \in S$ being chosen at the first step of Algorithm~\ref{a:cut}. Let $A'=N_S(v)$, $B'=N_S(u)$, and $Z=V - A' -B'$.
By the induction hypothesis, 
\begin{equation}\label{e:induction}
\E_{uv}[\be(A \cap Z, B\cap Z)] + s(Z) \leq \frac{|Z|^2}{4}.
\end{equation} 
As every edge with one end in $A'\cup B'$ and another in $Z$ is counted in $\be(A \cap Z, B\cap Z)$ with probability $1/2$, we have
\begin{equation}\label{e:cut}
\E_{uv}[\be(A, B)] = \frac{1}{2}e(A'\cup B',Z)+ \E_{uv}[\be(A \cap Z, B\cap Z)].
\end{equation} 
Further, let us note that
\begin{equation}\label{e:scount}
|S|=s(A',B')+s(A'\cup B',Z)+s(Z).
\end{equation}
Combining, (\ref{e:induction}),(\ref{e:cut}), and (\ref{e:scount}) we obtain 
\begin{equation}\label{e:local}
\E_{uv}[\be(A,B)]+ |S| \leq \frac{1}{2}e(A'\cup B',Z)+ \frac 14 |Z|^2 + s(A',B')+s(A'\cup B',Z).
\end{equation}
We now rewrite the right side of (\ref{e:local}) in terms of characteristic functions introduced earlier. Note that for $w \in V$, we have $w \in A' \cup B'$ if and only if $s(uw)+s(vw)=1$. We have
\begin{align}
\frac{1}{2}e(A'\cup B',Z)&+s(A'\cup B',Z) \notag\\ &= \sum_{(w,x) \in V^2}\left(\frac{3}{2}s(wx)+\frac{1}{2}c(wx)\right)(s(uw)+s(vw))(1-s(ux)-s(vx)),\label{e:local1}\\
|Z|^2 &= \sum_{(w,x) \in V^2} (1-s(uw)-s(vw)) (1-s(ux)-s(vx)),\label{e:local2}\\
 s(A',B') &= \sum_{(w,x) \in V^2}s(wx)s(uw)s(vx).\label{e:local3}   
\end{align}
The identities (\ref{e:local1}),(\ref{e:local2}) and (\ref{e:local3}), motivate the definition of the following function
\begin{align*}
f(u,v,w,x)&=s(uv)((3s(wx)+c(wx))(s(uw)+s(vw))(1-s(ux)-s(vx)) \\ &+ \frac 1 2 (1-s(uw)-s(vw)) (1-s(ux)-s(vx)) +2s(wx)s(uw)s(vx)).
\end{align*} for $(u,v,w,x) \in V^4$.
Then (\ref{e:local}) can be rewritten as 
\begin{equation}\label{e:localnew}
\E_{uv}[\be(A,B)]+ |S| \leq \frac{1}{2}\sum_{(w,x) \in V^2}f(u,v,w,x).
\end{equation}
Let $F(G) = \sum_{(u,v,w,x) \in V^4}f(u,v,w,x).$ We estimate $F(G)$ as follows. Note that 
\begin{equation}\label{e:fbound1}
\sum_{(u,v,w,x) \in V^4}s(uv)s(wx)(s(uw)+s(vw))(1-s(ux)-s(vx)) = 2P_4(G).
\end{equation}
Combining the results of  Lemmas~\ref{l:CS} and~\ref{l:bigsum}  and the identity (\ref{e:fbound1}),  we obtain
\begin{align}
F(G) &\leq |V|^2|S| +3P_4(G)+C_4(G)-K_{1,3}(G) -2D(G) \notag\\ &= 
 |V|^2|S| + (P_4(G)+C_4(G)-K_{1,3}(G)) + 2(P_4(G)-D(G)) \leq|V|^2|S|. \label{e:fbound} 
\end{align}
Finally, we have
\begin{align*}
|S|(\E[\be(A,B)]+ |S|) = \sum_{uv \in S}\left(\E_{uv}[\be(A,B)]+ |S|\right) \stackrel{(\ref{e:localnew})}{=}  \frac{1}{4}F(G) \stackrel{(\ref{e:fbound})}{\leq} \frac{1}{4}|V|^2|S|,
\end{align*}
implying (\ref{e:main2}).

\subsection{Characterizing extremal trigraphs}\label{s:extremal}
In this subsection, we prove that if a triangle-free trigraph  $G=(V,C,S)$ satisfies (\ref{e:main2}) with equality then $G$  is a $C$-join of complete balanced bipartite trigraphs. The proof is by induction on $|V|$. 

The base case $|V|=0$ is trivial.
For the induction step,  let $H=(V,S)$ be the graph formed by the edges in $S$. It is easy to see that if $G$ satisfies (\ref{e:main2}) with equality then the subgraph induced in $G$
by vertices of every component of $H$ satisfy (\ref{e:main2}) with equality, and, moreover, $uv \in C$ for every pair of vertices $u,v \in V$ belonging to different components of $H$.
Thus the desired result follows from the induction hypothesis, unless $H$ is connected, and so we assume it is. It remains to prove that $H$ is a complete balanced bipartite graph.
 
Note that it follows from (\ref{e:fbound}) that (\ref{e:CS-s}), (\ref{e:CS-n}) must hold with equality in $G$, and so must the first inequality in 
(\ref{e:fbound}). In particular,  we have that $\deg_{H}(u)=\deg_H(v)$ for every $uv \in S$, as (\ref{e:CS-s}) holds with equality.
Therefore we have
\begin{enumerate}
	\item[(i)] $\deg_{H}(u)=\deg_H(v)$  for all $u,v \in V$,
	\item[(ii)] $(n(uv)+c(uv))s(vw)n(uw)s(ux)n(vx)(n(xw)+c(xw))=0$ 
	for all $u,v,w,x \in V$,
	\item [(iii)] $s(uv)s(uw)n(wx)c(vx)=0$ for all $u,v,w,x \in V$.
\end{enumerate}

 Our next goal is to show that $C = \emptyset$. Suppose for a contradiction that  $C \neq \emptyset$, and choose $u_1,u_2,\ldots,u_n$, such that $u_1u_n \in C$,
	$u_iu_{i+1} \in S$ for $1 \leq i \leq n-1$, and subject to the above $n$ is minimum. Then $n \geq 4$, as $G$ is triangle-free.
	If $n\geq 5$, then $s(u_1u_2)s(u_2u_3)n(u_3u_n)c(u_1u_n)=1$, in contradiction with (iii). Thus $n=4$. Consider now arbitrary $v \in N_S(u_1)$. If $vu_4 \in C$ then   $s(u_1u_2)s(u_1v)n(u_2u_4)c(vu_4)=1$, once again contradicting (iii). Thus $n(vu_4)=1$, implying $v \in N_S(u_3)$, as otherwise $(n(vu_3)+c(vu_3))s(vu_1)n(vu_4)s(u_3u_4)n(u_1u_3)c(u_1u_4)=1$, contradicting (ii). It follows that $N_S(u_1) \subseteq N_S(u_3)- \{u_4\}$, contradicting (i).
	Thus $C=\emptyset$.
 
Note that for every edge $uv \in S$ the trigraph obtained from $G$ by deleting the vertices of $N_S(v) \cup N_S(u)$ must also satisfy (\ref{e:main2}) with equality. As $C= \emptyset$, it follows from the induction hypothesis that $V - N_S(v)-N_S(u)$ induces a complete balanced bipartite graph in $H$ for all $uv \in S$. Thus, if $N_S(v) \cup N_S(u) \neq V$ for some $uv \in S$, then there exist $w,x \in V - N_S(v)-N_S(u)$ such that $wx \in S$. However, in such  a case we have  $s(uv)s(wx)n(uw)n(ux)n(vw)n(vx)=1$, contradicting (ii). Thus $N_S(v) \cup N_S(u) = V$ for all $uv \in S$, and thus $\deg_H(v)=|V|/2$ for every $v \in V$ by (i). It follows that $H$ is a complete balanced bipartite graph, as desired.


\section{Concluding remarks}\label{s:remarks}

\subsection*{Influence of the structure of extremal examples on our proof.}
The range of possible techniques one can use to approach a given extremal problem is frequently dictated by the structure of the extremal examples. Inequalities used throughout the proof must be tight for these examples, and, conversely, the class of all the extremal  examples is often characterized by this property.

As mentioned in the introduction, the authors of~\cite{EGTTriangle} noted that Conjecture~\ref{c:mainEGT} might be difficult due to the large number of extremal examples. However, considered as trigraphs, extremal examples form a simple family from the point of view of \emph{local graph theory} (a term recently coined by Linial, see e.g.~\cite{LinialLocal}). More specifically, a trigraph $G=(V,C,S)$ is a   $C$-join of a collection complete balanced bipartite trigraphs if and only if it satisfies the following concise set of  conditions \begin{itemize}
\item $n(uv)n(vw)(s(uw)+c(uw))=0$ for all $u,v,w \in V,$ 
\item $n(uv)s(vw)c(uw)=0$ for all $u,v,w \in V(G),$
\item $C_4(G)=K_{1,3}(G)$, and
\item $\sum_{v \in V}s(uv)>0$ for every $u \in V(G)$
\end{itemize}
There does not, however, seem to exist a similar description of joins of complete balanced bipartite graphs in terms of relations between numbers of induced subgraphs of bounded size. 

The above observation suggest that any proof of Conjecture~\ref{c:main} using the standard techniques must take the structure of the maximum triangle-independent set into account. Further, the partitions corresponding to maximum cuts in the joins of complete balanced bipartite graphs can not be generated using a single step local randomized algorithm, similar to the procedure from~\cite{EFPS} described in the introduction. Hence recursion appears to be necessary. Algorithm~\ref{a:cut} used in our proof is the simplest procedure we could come up with which passes the above tests.

\subsection*{Algorithmic aspects.}
Algorithm~\ref{a:cut}, given the graph $G$ and a triangle-independent set $S \subseteq E(G)$ with $|S|=\alpha_1(G)$  as an input, provides a randomized way of generating a partition of $V(G)$, which certifies the validity of (\ref{e:main}). It is not hard to modify Algorithm~\ref{a:cut} to obtain a deterministic efficient algorithm with the same properties: Rather than choosing the edge of $uv \in S$ uniformly at random it suffices to choose it so that 
$\sum_{(w,x) \in V^2}f(u,v,w,x) \leq \frac{|V|^2}{2}$. The proof of Theorem~\ref{t:main2} shows that such a choice is possible.

The requirement that Algorithm~\ref{a:cut} has access to the maximum triangle independent set as an input is more substantial. It is easy to see that computing $\alpha_1(G)$ is NP-hard. This is not necessarily an insurmountable obstacle, but we do not know how to efficiently generate a partition $(A,B)$ of $V(G)$, satisfying $\be(A,B) \leq |V|^2/4 - \alpha_1(G)$, given only the graph $G$ as an input.

\subsection*{Making triangle-free graphs bipartite.}
It is tempting to try using Algorithm~\ref{a:cut} to attack Conjecture~\ref{c:bipartite}. However, the straightforward analysis, along the lines of the proof of Theorem~\ref{t:main2}, does not seem to allow to substantially improve on the bound  $\tau_B(G) \leq |V(G)|^2/18$ from~\cite{EFPS}. 

Moreover, it is impossible to prove Conjecture~\ref{c:bipartite} using only Algorithm~\ref{a:cut} to generate the bipartition: when applied to the Clebsch graph, the algorithm always outputs a bipartition $(A,B)$ such that $\be(A,B) = 12 =(16)^2/(21\frac{1}{3}) > (16)^2/25$. In fact, it appears to be rather difficult to generate the maximum cut in the Clebsch graph using any local randomized procedure.

\subsection*{Other upper bounds on $\alpha_1(G)$ and $\tau_1(G)$.}
The following question is closely related to questions of Erd\H{o}s, Gallai and Tuza in~\cite{ErdProblems90,EGTTriangle,TuzaProblems}.

\begin{question}\label{q:bound2}
Determine $$c_{\tau}=\max \{ c \in \bb{R} \: : \:  \alpha_1(G)+c\tau_1(G) \leq |E(G)| \: \mathrm{for \: every \: graph}\; G \}.$$  
\end{question}

Clearly, $c_{\tau} \geq 1$. Moreover, $c_{\tau} \leq 2$, as $\alpha_1(G)+2\tau_1(G)=|E(G)|$ for the all the graphs which serve as extremal examples for Theorem~\ref{t:main}, as well as many others. To the best of our knowledge no other bounds on $c_{\tau}$ are known. Tuza (see~\cite{ErdProblems90}) conjectured that $c_{\tau} \geq 5/3$.

Following~\cite{EGTTriangle}, define $\tau_2(G)$ to be the minimum size of the set $F \subseteq E(G)$ such that every triangle in $G$ contains at least two edges of $F$. Then $\tau_2(G)= |E(G)|-\alpha_1(G)$. Therefore, $c_{\tau}$ is the maximum real number such that $\tau_2(G) \geq c_{\tau}\tau_1(G)$ for every graph $G$. 

 One can define the natural  fractional versions of $\tau_1(G)$ and $\tau_2(G)$ as follows. For $i=1,2$, let $\tau^*_i(G)$ be the minimum of $\sum_{e \in E(G)}w(e)$ taken over all functions $w:E(G) \to \bb{R}_+$ such that $\sum_{e \in E(T)}w(e) \geq i$ for every triangle $T$ in $G$. Clearly, $\tau^*_2(G) = 2\tau^*_1(G)$ for every graph $G$. Thus, perhaps, $c_{\tau}=2$, which would, in our opinion, give the most pleasing answer to Question~\ref{q:bound2}.

Unfortunately, the methods of this paper are  not applicable to Question~\ref{q:bound2}, as one can not replace  $\tau_1(G)$ by $\tau_B(G)$. (Consider for example non-bipartite triangle-free graphs.) 



\begin{bibdiv}
	\begin{biblist}
		
		\bib{ErdProb84}{incollection}{
			author={Erd{\H{o}}s, Paul},
			title={On some problems in graph theory, combinatorial analysis and
				combinatorial number theory},
			date={1984},
			booktitle={Graph theory and combinatorics ({C}ambridge, 1983)},
			publisher={Academic Press, London},
			pages={1\ndash 17},
			review={\MR{777160}},
		}
		
		\bib{ErdProblems90}{incollection}{
			author={Erd{\H{o}}s, Paul},
			title={Some of my old and new combinatorial problems},
			date={1990},
			booktitle={Paths, flows, and {VLSI}-layout ({B}onn, 1988)},
			series={Algorithms Combin.},
			volume={9},
			publisher={Springer, Berlin},
			pages={35\ndash 45},
			review={\MR{1083376 (91j:05001)}},
		}
		
		\bib{ErdProblems99}{article}{
			author={Erd{\H{o}}s, Paul},
			title={A selection of problems and results in combinatorics},
			date={1999},
			ISSN={0963-5483},
			journal={Combin. Probab. Comput.},
			volume={8},
			number={1-2},
			pages={1\ndash 6},
			url={http://dx.doi.org/10.1017/S0963548398003496},
			note={Recent trends in combinatorics (M{\'a}trah{\'a}za, 1995)},
			review={\MR{1684620 (2000b:05002)}},
		}
		
		\bib{EFPS}{article}{
			author={Erd{\H{o}}s, Paul},
			author={Faudree, Ralph},
			author={Pach, J{\'a}nos},
			author={Spencer, Joel},
			title={How to make a graph bipartite},
			date={1988},
			journal={J. Combin. Theory Ser. B},
			volume={45},
			number={1},
			pages={86\ndash 98},
		}
		
		\bib{EGTTriangle}{article}{
			author={Erd{\H{o}}s, Paul},
			author={Gallai, Tibor},
			author={Tuza, Zsolt},
			title={Covering and independence in triangle structures},
			date={1996},
			ISSN={0012-365X},
			journal={Discrete Math.},
			volume={150},
			number={1-3},
			pages={89\ndash 101},
			url={http://dx.doi.org/10.1016/0012-365X(95)00178-Y},
			note={Selected papers in honour of Paul Erd{\H{o}}s on the occasion of
				his 80th birthday (Keszthely, 1993)},
			review={\MR{1392722 (97d:05222)}},
		}
		
		\bib{LinialLocal}{article}{
			author={Linial, Nati},
			title={The local geometry of graphs, or, how to ``read" big graphs},
			how={Lecture at Simons Institute, UC Berkeley,},
			date={2013},
			eprint={http://www.cs.huji.ac.il/~nati/PAPERS/reading_big_graphs.pdf},
		}
		
		\bib{PuleoExtremal15}{article}{
			author={Puleo, Gregory~J.},
			title={Extremal aspects of the {E}rd{\H o}s-{G}allai-{T}uza conjecture},
			date={2015},
			ISSN={0012-365X},
			journal={Discrete Math.},
			volume={338},
			number={8},
			pages={1394\ndash 1397},
			url={http://dx.doi.org/10.1016/j.disc.2015.02.013},
			review={\MR{3336108}},
		}
		
		\bib{Puleo15}{article}{
			author={Puleo, Gregory~J.},
			title={On a conjecture of {E}rd{\H o}s, {G}allai, and {T}uza},
			date={2015},
			ISSN={0364-9024},
			journal={J. Graph Theory},
			volume={80},
			number={1},
			pages={12\ndash 17},
			url={http://dx.doi.org/10.1002/jgt.21828},
			review={\MR{3372335}},
		}
		
		\bib{FlagAlgebras}{article}{
			author={Razborov, Alexander~A.},
			title={Flag algebras},
			date={2007},
			journal={J. Symbolic Logic},
			volume={72},
			number={4},
			pages={1239\ndash 1282},
		}
		
		\bib{SudK4}{article}{
			author={Sudakov, Benny},
			title={Making a {$K_4$}-free graph bipartite},
			date={2007},
			ISSN={0209-9683},
			journal={Combinatorica},
			volume={27},
			number={4},
			pages={509\ndash 518},
			url={http://dx.doi.org/10.1007/s00493-007-2238-0},
			review={\MR{2359832 (2008i:05091)}},
		}
		
		\bib{TuzaProblems}{article}{
			author={Tuza, Zsolt},
			title={Unsolved combinatorial problems part {I}},
			date={2001},
			journal={BRICS Lecture Series},
			number={LS-01-1},
		}
		
		\bib{Xu15}{article}{
			author={Xu, Honghai},
			title={A note on bipartite subgraphs and triangle-independent sets},
			date={2015},
			eprint={arXiv:1512.06202},
		}
		
	\end{biblist}
\end{bibdiv}
\end{document}